\newtheorem{thm}{Theorem}[section]
\newtheorem{cor}[thm]{Corollary}
\newtheorem{lem}[thm]{Lemma}
\newtheorem{prop}[thm]{Proposition}
\theoremstyle{definition}
\newtheorem{dfn}[thm]{Definition}
\newtheorem{rem}[thm]{Remark}
\newtheorem{ex}[thm]{Example}
\newtheorem*{claim*}{Claim}
\theoremstyle{remark}
\newtheorem*{ac}{Acknowlegments}
\numberwithin{equation}{thm}
\def\syz{\mathsf{\Omega}}
\def\depth{\operatorname{\mathsf{depth}}}
\def\Hom{\operatorname{\mathsf{Hom}}}
\def\depth{\operatorname{\mathsf{depth}}}
\def\gr{\operatorname{\mathsf{gr}}}
\def\index{\operatorname{\mathsf{index}}}
\def\reg{\operatorname{\mathsf{reg}}}
\begin{document}
\allowdisplaybreaks
\setlength{\baselineskip}{15pt}
\title[]{On delta invariants and indices of ideals}
\author{Toshinori Kobayashi}
\address{Graduate School of Mathematics, Nagoya University, Furocho, Chikusaku, Nagoya, Aichi 464-8602, Japan}
\email{m16021z@math.nagoya-u.ac.jp}
\thanks{2010 {\em Mathmatics subject  Classification.} 13A30, 13C14, 13H10}
\thanks{{\em Key words and phrases.} Cohen-Macaulay ring,  Cohen-Macaulay approximation, Ulrich ideal}
\begin{abstract}
Let $R$ be a Cohen-Macaulay local ring with a canonical module. We consider Auslander's (higher) delta invariants of powers of certain ideals of $R$. Firstly, we shall provide some conditions for an ideal to be a parameter ideal in terms of delta invarints. As an application of this result, we give upper bounds for orders of Ulrich ideals of $R$ when $R$ has Gorenstein punctured spectrum. Secondly, we extend the definition of indices to the ideal case, and generalize the result of Avramov-Buchweitz-Iyengar-Miller on the relationship between the index and regularity.
\end{abstract}
\maketitle
\section{Introduction}
Let $(R,\mathfrak{m},k)$ be a Cohen-Macaulay local ring with a canonical module. The Auslander $\delta$-invariant $\delta_R(M)$ for a finitely generated $R$-module $M$ is defined to be the rank of maximal free summand of the a minimal Cohen-Macaulay approximation of $M$. For an integer $n\geq 0$, the $n$-th $\delta$-invariant is also defined by Auslander, Ding and Solberg \cite{AR} as $\delta_R^n(M)=\delta_R(\Omega^n_R M)$, where $\Omega^n_R M$ denotes the $n$ th syzygy module of $M$ in the minimal free resolution.

On these invariants, combining the Auslander's result (see\cite[Corollary 5.7]{AR}) and Yoshino's one\cite{Y}, we have the following theorem.

\begin{thm} [Auslander, Yoshino] \label{A0}
Let $d>0$ be the Krull dimension of $R$. Consider the following conditions.
\begin{itemize}
\item[(a)] $R$ is a regular local ring.
\item[(b)] There exists $n\geq 0$ such that $\delta^n(R/\mathfrak{m})>0$.
\item[(c)] There exists $n>0$ and $l>0$ such that $\delta^n(R/\mathfrak{m}^l)>0$.
\end{itemize}
Then, the implications {\rm (a)} $\Leftrightarrow$ {\rm(b)} $\Rightarrow$ {\rm(c)} hold. The implication {\rm(b)} $\Rightarrow$  {\rm(a)} holds if $\depth \gr_\mathfrak{m}(R)\geq d-1$.
\end{thm}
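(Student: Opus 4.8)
The plan is to treat the three implications in turn, using only the standard formal properties of Auslander's invariant: $\delta_R(R)=1$, additivity over direct sums, the inequality $\delta_R(M)\ge\delta_R(N)$ for every surjection $M\twoheadrightarrow N$, the identity $\delta_R^n(M)=\delta_R(\Omega^n_R M)$, and the fact that $\delta_R(X)$ equals the rank of a maximal free direct summand of $X$ once $X$ is maximal Cohen-Macaulay.

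For (a) $\Rightarrow$ (b): if $R$ is regular then $\pd_R(R/\mathfrak m)=d$, so in the minimal free resolution of $R/\mathfrak m$ the $d$-th differential is injective; hence $\Omega^d_R(R/\mathfrak m)$ is a nonzero free module and $\delta^d(R/\mathfrak m)=\delta_R\bigl(\Omega^d_R(R/\mathfrak m)\bigr)=\rank\Omega^d_R(R/\mathfrak m)>0$. Thus (b) holds with $n=d$, and this step is immediate.

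For (b) $\Rightarrow$ (c): let $n\ge 0$ witness (b). If $n\ge 1$ we are done with $l=1$. If $n=0$, so that $\delta_R(R/\mathfrak m)>0$, then because $R\twoheadrightarrow R/\mathfrak m^2$ is a minimal free cover we have $\Omega^1_R(R/\mathfrak m^2)=\mathfrak m^2$, and since $d>0$ we have $\mathfrak m^2\neq\mathfrak m^3$ (otherwise $\mathfrak m^2=0$ by Nakayama and $\dim R=0$), so there is a surjection $\mathfrak m^2\twoheadrightarrow\mathfrak m^2/\mathfrak m^3\cong(R/\mathfrak m)^{c}$ with $c=\mu_R(\mathfrak m^2)\ge 1$. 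Hence $\delta^1(R/\mathfrak m^2)=\delta_R(\mathfrak m^2)\ge c\,\delta_R(R/\mathfrak m)>0$, and (c) holds with $n=1$, $l=2$; this step is again routine.

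The substantive implication is (b) $\Rightarrow$ (a) under $\depth\gr_{\mathfrak m}(R)\ge d-1$, which is Yoshino's contribution. Setting $M=\Omega^n_R(R/\mathfrak m)$, the hypothesis $\delta_R(M)>0$ says exactly that in the minimal Cohen-Macaulay approximation $0\to Y\to X\to M\to 0$ the maximal Cohen-Macaulay module $X$ has $R$ as a direct summand (here $Y$ has finite injective dimension, and some care is needed because $R$ is not assumed Gorenstein). I would then reduce the Krull dimension: the inequality $\depth\gr_{\mathfrak m}(R)\ge d-1$ lets one choose $x\in\mathfrak m\setminus\mathfrak m^2$ that is simultaneously $R$-regular and superficial for $\mathfrak m$, so that $\gr_{\mathfrak m}(R/xR)=\gr_{\mathfrak m}(R)/(x^{*})$ and the hypothesis $\depth\gr_{\mathfrak m}(R/xR)\ge\dim(R/xR)-1$ holds again; one checks that $R/xR$ is regular precisely when $R$ is, and that the free summand produced above descends to a corresponding one over $R/xR$ (using superficiality of $x$). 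Iterating brings us to $\dim R\le 1$, where I would conclude by a direct analysis of the minimal free resolution of the residue field: over such a ring, a free summand occurring in (the Cohen-Macaulay approximation of) a syzygy of $k$ forces $\pd_R k<\infty$, i.e.\ $R$ is regular. The main obstacle is precisely this last phenomenon, since the implication ``a syzygy of $k$ has a free summand $\Rightarrow$ $R$ is regular'' fails without the graded-depth hypothesis; the delicate work is to exploit $\depth\gr_{\mathfrak m}(R)\ge d-1$ both to preserve this property throughout the reduction and to establish it in the reduced situation. Alternatively, for this implication one may simply cite \cite{Y}.
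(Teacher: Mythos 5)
Your treatment of (a) $\Rightarrow$ (b) and of (b) $\Rightarrow$ (c) is correct and uses only standard formal properties of $\delta$, so those steps are fine. The genuine gap is in the backward implications. The theorem asserts (a) $\Leftrightarrow$ (b) \emph{unconditionally}: the implication (b) $\Rightarrow$ (a) with no hypothesis on $\gr_{\mathfrak m}(R)$ is Auslander's theorem (\cite[Corollary 5.7]{AR}), and you never establish it --- you only sketch it under the additional assumption $\depth \gr_{\mathfrak m}(R)\geq d-1$. You have also misplaced the role of that depth hypothesis: it is needed for the implication going back from (c), i.e.\ from $\delta^n(R/\mathfrak m^l)>0$ with $l$ possibly at least $2$ to regularity, which is Yoshino's theorem \cite{Y}; the last sentence of the statement is evidently a misprint for ``(c) $\Rightarrow$ (b)'', as one sees by comparing with Theorem \ref{A}, whose conditions (b), (c), (d) specialize at $I=\mathfrak m$ to (a), (b), (c) here and whose implication (d) $\Rightarrow$ (c) carries exactly this hypothesis. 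Your proposal proves no implication out of (c), so the part of the theorem that actually requires the depth assumption is untouched, while the part you attach the assumption to does not need it. Relatedly, your closing remark that ``a syzygy of $k$ has a free summand $\Rightarrow$ $R$ regular'' fails without the graded-depth hypothesis is backwards: for the residue field itself this is precisely the unconditional statement.

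The inductive sketch is also too thin at the one point where work is required. After passing to $R/(x)$, Lemma \ref{E} gives $\delta^n_R(M)\leq\delta^{n-1}_{R/(x)}\bigl(\syz^{n-1}_R(M)\otimes_R R/(x)\bigr)$, but $\syz^1_R(R/\mathfrak m^l)\otimes_R R/(x)=\mathfrak m^l/x\mathfrak m^l$ is not a syzygy over $R/(x)$ of the module you want; one must split it as $\mathfrak m^{l-1}/\mathfrak m^l\oplus\mathfrak m^l/x\mathfrak m^{l-1}$ (Lemma \ref{F}) and run a case analysis on which summand carries the nonzero $\delta$ --- ``the free summand descends using superficiality'' does not capture this, and it is exactly here that the hypothesis $\depth\gr_{\mathfrak m}(R)\geq d-1$ (freeness of the modules $\mathfrak m^i/\mathfrak m^{i+1}$ over $k$ is automatic, but the existence of $x$ with $x^*$ a nonzerodivisor is not) is consumed. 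Note finally that the paper does not prove Theorem \ref{A0} directly: it cites \cite{AR} and \cite{Y} and recovers the statement as the case $I=\mathfrak m$ of Theorem \ref{A}; your offer to ``simply cite \cite{Y}'' is therefore acceptable for the (c)-implication, but it does not repair the missing unconditional proof of (b) $\Rightarrow$ (a).
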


Here we denote by $\gr_I(R)$ the associated graded ring of $R$ with respect to an ideal $I$ of $R$. In this paper, we characterize parameter ideals in terms of (higher) $\delta$-invariants as follows.
\begin{thm} \label{A}
Let $(R,\mathfrak{m})$ be a Cohen-Macaulay local ring with a canonical module $\omega$, having a infinite residue field $k$ and Krull dimension $d>0$. Put $I$ be an $\mathfrak{m}$-primary ideal of $R$ such that $I/I^2$ is a free $R/I$-module. Consider the following conditions.
\begin{itemize}
\item[(a)] $\delta(R/I)>0$.
\item[(b)] $I$ is a parameter ideal of $R$.
\item[(c)] $\exists n\geq0$ such that $\delta^n(R/I)>0$.
\item[(d)] $\exists n>0$ and $l>0$ such that $\delta^n(R/I^l)>0$.
\end{itemize}
Then, the implications {\rm (a)} $\Rightarrow$ {\rm (b)} $\Leftrightarrow$ {\rm (c)} $\Rightarrow$ {\rm (d)} hold. The implication {\rm (d)} $\Rightarrow$ {\rm (c)} holds if $\depth \gr_I(R)\geq d-1$ and $I^i/I^{i+1}$ is a free $R/I$-module for any $i>0$. The implication {\rm (b)} $\Rightarrow$ {\rm (a)} also holds if $I \subset \mathsf{tr}(\omega)$.
\end{thm}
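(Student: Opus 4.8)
The plan is to dispose of the formal implications first and then concentrate on the three substantive directions.

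\emph{Formal implications.} For $(b)\Rightarrow(c)$, write $I=(x_1,\dots,x_d)$ with $x_1,\dots,x_d$ a system of parameters, hence an $R$-regular sequence since $R$ is Cohen--Macaulay. The Koszul complex on $x_1,\dots,x_d$ then resolves $R/I$, and it is minimal because its differentials have entries in $\mathfrak m$; regularity of the sequence makes its top differential injective, so $\syz^d_R(R/I)\cong R$ and $\delta^d_R(R/I)=\delta_R(R)=1>0$, which is $(c)$ with $n=d$. Consequently $(c)\Rightarrow(b)\Rightarrow(c)$ upgrades any witness of $(c)$ to $n=d>0$, and with $l=1$ this yields $(d)$, proving $(c)\Rightarrow(d)$. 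Finally $(a)$ is literally the case $n=0$ of $(c)$, so $(a)\Rightarrow(b)$ is subsumed by $(c)\Rightarrow(b)$; only $(c)\Rightarrow(b)$ remains among the unconditional claims.

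\emph{$(c)\Rightarrow(b)$, the heart of the matter.} Assume $\delta^n_R(R/I)>0$. I aim to show $\pd_R(R/I)<\infty$; granting this, a theorem of Vasconcelos (an ideal $J$ with $\pd_R(R/J)<\infty$ and $J/J^2$ free over $R/J$ is generated by an $R$-regular sequence) applies to $I$, and since $I$ is $\mathfrak m$-primary we have $\height I=d$, so by Krull's height theorem any generating regular sequence for $I$ has exactly $d$ members; hence $I$ is a parameter ideal. To extract finiteness of projective dimension from $\delta^n_R(R/I)>0$ I would first move into the range $n\ge d$, where $\syz^n_R(R/I)$ is genuinely maximal Cohen--Macaulay and $\delta^n_R(R/I)>0$ is the literal assertion that it has a nonzero free summand; the freeness of $I/I^2$ over $R/I$ is the device both for that passage and for descending again afterwards. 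Concretely, writing $r=\rank_{R/I}(I/I^2)$ (the minimal number of generators of $I$), a diagram chase between a minimal presentation $R^r\twoheadrightarrow I$ and the sequence $0\to I^2\to I\to I/I^2\to0$ produces
\[
0\longrightarrow\syz^2_R(R/I)\longrightarrow\syz^1_R(R/I)^{\oplus r}\longrightarrow I^2\longrightarrow0,
\]
and further relations of this type among the syzygies of $R/I$; organized properly, they transport a free summand up into the Cohen--Macaulay range and back down, forcing the minimal free resolution of $R/I$ to terminate. I expect this to be the main obstacle of the whole theorem: the theory of $\delta$-invariants never by itself yields finiteness of projective dimension, so the entire weight of the hypothesis $I/I^2$ free over $R/I$ must be invested here, and the delicate point is organizing the propagation so that the free summand is not absorbed into the other syzygies.

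\emph{$(d)\Rightarrow(c)$ under $\depth\gr_I(R)\ge d-1$ and $I^i/I^{i+1}$ free over $R/I$ for all $i>0$.} I would mimic Yoshino's argument behind Theorem~\ref{A0}. Freeness of every $I^i/I^{i+1}$ makes $\gr_I(R)$ a free $R/I$-module, so each $R/I^l$ is a finite iterated extension of copies of $R/I$ through its $I$-adic filtration; combined with $\depth\gr_I(R)\ge d-1$ one can choose an $I$-superficial element $x\in I$ that is regular on $R$ and on $\gr_I(R)$ and pass to $\overline R=R/(x)$, $\overline I=I/(x)$, where the same hypotheses hold with $d$ replaced by $d-1$ and where the $\delta$-invariants of $R/I^l$ and of $\overline R/\overline I^{\,l}$ are comparable because $x$ is regular. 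Descending induction on $d$ reduces to $\dim R=1$, where $\gr_I(R)$ is automatically Cohen--Macaulay and $\delta^{n'}_R(R/I^l)>0\Rightarrow\delta^{n''}_R(R/I)>0$ can be read off the filtration directly; keeping the relevant free summand alive under reduction modulo $x$ is precisely what the freeness of the graded pieces guarantees.

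\emph{$(b)\Rightarrow(a)$ under $I\subset\mathsf{tr}(\omega)$.} For the parameter ideal $I$ one must exhibit a minimal Cohen--Macaulay approximation $0\to Y\to X\to R/I\to0$ with $\id_R Y<\infty$ in which $X$ has a free summand; as $R/I$ is cyclic one has $\delta_R(R/I)\in\{0,1\}$, so this amounts to $\delta_R(R/I)=1$. The hypothesis $I\subset\mathsf{tr}(\omega)$ says the evaluation map $\omega\otimes_R\Hom_R(\omega,R)\to R$ has image containing $I$, so choosing $f_1,\dots,f_m\in\Hom_R(\omega,R)$ and $u_1,\dots,u_m\in\omega$ with suitable values one builds a homomorphism from a module assembled out of copies of $\omega$---hence of finite injective dimension---onto an ideal between $I$ and $R$, and splicing it with the canonical sequence $0\to I\to R\to R/I\to0$ and reducing produces a rank-one free summand in $X$. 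The case of general $d$ is reduced to $\dim R=1$ by a superficial element and a Koszul presentation. The point needing care is minimality---that the free summand genuinely survives in the minimal Cohen--Macaulay approximation---which is where $I\subset\mathsf{tr}(\omega)$, not merely $I\cap\mathsf{tr}(\omega)\ne0$, is used.
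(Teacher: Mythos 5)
Your reduction of everything to (c)$\Rightarrow$(b) is sound, and (b)$\Rightarrow$(c) via the Koszul complex matches the paper. But the central implication is where your proposal has a genuine gap, and you essentially acknowledge it yourself. Your plan is to deduce $\pd_R(R/I)<\infty$ from $\delta^n_R(R/I)>0$ and then invoke Vasconcelos. The endpoint is fine, but the step from ``some syzygy of $R/I$ has a minimal Cohen--Macaulay approximation with a free summand'' to ``the minimal free resolution of $R/I$ terminates'' is not a technical detail to be organized later --- it is the entire content of the theorem, and the machinery you gesture at (the sequence $0\to\syz^2(R/I)\to\syz^1(R/I)^{\oplus r}\to I^2\to0$ and a ``transport'' of free summands) is neither derived nor shown to accomplish this. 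The paper takes a different and complete route: induction on $d$. One picks an $R$-regular element $x\in I\setminus\mathfrak m I$, so that $x$ is part of a free basis of $I/I^2$; the key computation (Lemma~\ref{F}(3) with $i=1$) gives a \emph{direct sum} decomposition $I/xI\cong R/I\oplus I/(x)$, and combined with $\delta_R(M)\le\delta_{R/(x)}(M/xM)$ (Lemma~\ref{E}) one gets $\delta^{n-1}_{R/(x)}(R/I)>0$ or $\delta^{n-1}_{R/(x)}(I/(x))>0$, i.e.\ the hypothesis (c) for the ideal $\overline I=I/(x)$ of $R/(x)$, whose $\overline I/\overline I^{\,2}$ is again free by Lemma~\ref{F}(2). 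The base case $d=1$ is elementary (Lemma~\ref{G}): an $\mathfrak m$-primary ideal that is MCM and has a free summand is principal. Nothing about finite projective dimension is ever needed.

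The same remark applies, less severely, to your sketches of (d)$\Rightarrow$(c) and (b)$\Rightarrow$(a). For (d)$\Rightarrow$(c) your overall strategy (superficial element, induction on $d$) is the paper's, but the assertion that the $\delta$-invariants of $R/I^l$ and $\overline R/\overline I^{\,l}$ are ``comparable'' and that the $d=1$ case ``can be read off the filtration'' hides the real work: $\delta$ does not pass through filtrations or extensions in the required direction, and the argument genuinely needs the splittings $I^l/xI^l\cong I^{l-1}/I^l\oplus I^l/xI^{l-1}$ and the identity $xI^{l-1}=(x)\cap I^l$ (so that $\overline{I^l}=\overline I^{\,l}$) from Lemma~\ref{F}, plus a separate reduction argument in dimension one using a minimal reduction of $I$. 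For (b)$\Rightarrow$(a) the paper simply cites the proof of \cite[Theorem 11.42]{LW}; your construction from elements of $\mathsf{tr}(\omega)$ is in the right spirit but is not carried to the point where minimality of the approximation is actually verified.
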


Here $\mathsf{tr}(\omega)$ is the trace ideal or $\omega$. that is, the image of the natural homomorphism $\omega\otimes_R \Hom_R(\omega,R) \to R$ mapping $x\otimes f$ to $f(x)$ for $x\in \omega$ and $f\in \Hom_R(\omega,R)$. This result recovers the Theorem \ref{A0} by letting $I=\mathfrak{m}$.

On the other hand, Ding studies the $\delta$-invariant of $R/\mathfrak{m}^l$ with $l\geq 1$ and defines the index $\index (R)$ of $R$ to be the smallest number $l$ such that $\delta(R/\mathfrak{m}^l)=1$.

Extending this, we define the index for an ideal.

\begin{dfn}
For an ideal $I$ of $R$, we define the {\it index} $\index(I)$ of $I$ to be the infimum of integers $l\geq 1$ such that $\delta_R(R/I^l)=1$.
\end{dfn}

The definition above recovers the ordinary index by taking the maximal ideal.

Using the argument of Ding \cite{D} concerning on indices of rings, Avramov, Buchweitz, Iyengar and Miller\cite[Lemma 1.5]{ABIM} showed the following equality.

\begin{thm}[Avramov-Buchweitz-Iyengar-Miller] \label{B0}
Assume that $R$ is a Gorenstein local ring and $\gr_\mathfrak{m}(R)$ is a Cohen-Macaulay graded ring. Then $\index(R)=\reg(\gr_\mathfrak{m}(R))+1$.
\end{thm}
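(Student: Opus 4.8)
The plan is to reduce Theorem~\ref{B0} to the case $\dim R=0$, where it is elementary, and to transport both sides of the claimed equality along the reduction. Since $\gr_\mathfrak{m}(R)$ is Cohen--Macaulay of dimension $d=\dim R$, and since the base change $R\to R[X]_{\mathfrak{m}R[X]}$ is faithfully flat and preserves Gorensteinness, the associated graded ring, its Cohen--Macaulayness and its regularity, as well as the index, I may assume $k$ is infinite. I would then choose a \emph{general} minimal reduction $x_1,\dots,x_d$ of $\mathfrak{m}$, so that the initial forms $x_1^\ast,\dots,x_d^\ast$ form a homogeneous system of parameters of $\gr_\mathfrak{m}(R)$, hence --- using that $\gr_\mathfrak{m}(R)$ is Cohen--Macaulay --- a regular sequence on $\gr_\mathfrak{m}(R)$, and so that $x_1,\dots,x_d$ is itself an $R$-regular sequence. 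Put $\bar R=R/(x_1,\dots,x_d)$, an Artinian Gorenstein ring. Two consequences of this choice will be used: first, because $x_1^\ast,\dots,x_d^\ast$ is $\gr_\mathfrak{m}(R)$-regular one has $\gr_{\bar{\mathfrak{m}}}(\bar R)=\gr_\mathfrak{m}(R)/(x_1^\ast,\dots,x_d^\ast)$, whence $\reg(\gr_{\bar{\mathfrak{m}}}(\bar R))=\reg(\gr_\mathfrak{m}(R))$ since regularity is unchanged modulo a linear nonzerodivisor; second, the Valabrega--Valla identities $\mathfrak{m}^l\cap(x_1,\dots,x_d)=(x_1,\dots,x_d)\mathfrak{m}^{l-1}$ hold for every $l$, which will be needed to control the reduction of Cohen--Macaulay approximations in the last step (note also the trivial identification $\bar R/\bar{\mathfrak{m}}^l\cong R/(\mathfrak{m}^l+(x_1,\dots,x_d))$).

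Next I would dispose of the Artinian base case. If $\bar R$ is Artinian Gorenstein and $s$ denotes the integer with $\bar{\mathfrak{m}}^s\neq0=\bar{\mathfrak{m}}^{s+1}$, then $\gr_{\bar{\mathfrak{m}}}(\bar R)=\bigoplus_{i\ge0}\bar{\mathfrak{m}}^i/\bar{\mathfrak{m}}^{i+1}$ lives in degrees $0,\dots,s$, so $\reg(\gr_{\bar{\mathfrak{m}}}(\bar R))=s$. On the other hand, over the Artinian ring $\bar R$ every module is maximal Cohen--Macaulay, so its minimal Cohen--Macaulay approximation is the identity map and $\delta_{\bar R}(M)$ is simply the rank of a maximal free direct summand of $M$. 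For $l\ge s+1$ we have $\bar{\mathfrak{m}}^l=0$, so $\bar R/\bar{\mathfrak{m}}^l=\bar R$ is free and $\delta_{\bar R}(\bar R/\bar{\mathfrak{m}}^l)=1$; for $1\le l\le s$ the cyclic module $\bar R/\bar{\mathfrak{m}}^l$ has nonzero annihilator, hence is not free, hence --- being cyclic --- has no nonzero free summand, so $\delta_{\bar R}(\bar R/\bar{\mathfrak{m}}^l)=0$. Therefore $\index(\bar R)=s+1=\reg(\gr_{\bar{\mathfrak{m}}}(\bar R))+1$.

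It then remains to establish the key comparison $\delta_R(R/\mathfrak{m}^l)=\delta_{\bar R}(\bar R/\bar{\mathfrak{m}}^l)$ for every $l\ge 1$, for then $\index(R)=\index(\bar R)=\reg(\gr_{\bar{\mathfrak{m}}}(\bar R))+1=\reg(\gr_\mathfrak{m}(R))+1$, as desired. I would prove this one variable at a time: with $x=x_1$ a general superficial $R$-regular element I would show $\delta_R(R/\mathfrak{m}^l)=\delta_{R/xR}\!\big((R/xR)/\overline{\mathfrak{m}}^l\big)$ and then iterate, the hypotheses propagating since the associated graded ring of $R/xR$ is $\gr_\mathfrak{m}(R)/x^\ast\gr_\mathfrak{m}(R)$, again Cohen--Macaulay. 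To compare the $\delta$'s, take a minimal Cohen--Macaulay approximation $0\to Y\to X\to R/\mathfrak{m}^l\to0$ over $R$ (so $X$ is maximal Cohen--Macaulay and $Y$ has finite injective dimension, equivalently finite projective dimension since $R$ is Gorenstein) and reduce modulo $x$: as $x$ is $R$-regular and $\pd_R Y<\infty$, the sequence $0\to Y/xY\to X/xX\to (R/\mathfrak{m}^l)/x(R/\mathfrak{m}^l)\to0$ stays exact with $X/xX$ maximal Cohen--Macaulay over $R/xR$ and $Y/xY$ of finite projective dimension, and the right-hand term is $(R/xR)/\overline{\mathfrak{m}}^l$. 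One then checks that, for $x$ general, this reduced sequence is still a \emph{minimal} Cohen--Macaulay approximation, whence the rank of a maximal free summand of $X$ equals that of $X/xX$, giving the comparison.

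The step I expect to be the main obstacle is exactly the last one: verifying that minimality of the Cohen--Macaulay approximation is preserved under reduction by $x$ and that no spurious free summands appear in $X/xX$. This is precisely where Cohen--Macaulayness of $\gr_\mathfrak{m}(R)$ is used --- through the Valabrega--Valla identities and the fact that $x^\ast$ is a nonzerodivisor on $\gr_\mathfrak{m}(R)$, which prevent the annihilators of the modules $x(R/\mathfrak{m}^l)$ from being larger than expected --- and it is essentially the content of Ding's argument \cite{D}. Along the way one should also record Ding's observation that $\delta_R(R/\mathfrak{m}^l)\in\{0,1\}$, so that $\index(R)$ is well defined and the comparison is equivalent to: $\delta_R(R/\mathfrak{m}^l)=1$ if and only if $\delta_{\bar R}(\bar R/\bar{\mathfrak{m}}^l)=1$.
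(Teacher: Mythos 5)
Your reduction to dimension zero and the Artinian computation are both fine: over an Artinian Gorenstein ring every module is maximal Cohen--Macaulay, $\delta$ is the rank of a maximal free summand, a cyclic module $\bar R/\bar{\mathfrak{m}}^l$ has a free summand precisely when $\bar{\mathfrak{m}}^l=0$, and $\reg(\gr_{\bar{\mathfrak{m}}}(\bar R))=s$. The gap is in the comparison step, and it is not the one you flag (preservation of minimality) but something more basic: the sequence $0\to Y/xY\to X/xX\to (R/\mathfrak{m}^l)/x(R/\mathfrak{m}^l)\to 0$ is \emph{not} exact. Tensoring the approximation with $R/xR$ gives
\begin{equation*}
0\to \operatorname{Tor}_1^R(R/\mathfrak{m}^l,R/xR)\to Y/xY\to X/xX\to (R/\mathfrak{m}^l)/x(R/\mathfrak{m}^l)\to 0,
\end{equation*}
and $\operatorname{Tor}_1^R(R/\mathfrak{m}^l,R/xR)\cong(\mathfrak{m}^l:_Rx)/\mathfrak{m}^l\supseteq\mathfrak{m}^{l-1}/\mathfrak{m}^l\neq0$. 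The hypotheses you invoke ($x$ regular on $R$, $\pd_RY<\infty$) are beside the point; what is needed is that $x$ be regular on the module being approximated, and this fails for the finite-length module $R/\mathfrak{m}^l$. This is exactly the hypothesis of the reduction lemma the paper relies on (Lemma \ref{E}). Moreover, even where that lemma applies it yields only the inequality $\delta_R(M)\le\delta_{R/(x)}(M/xM)$, so your asserted equality $\delta_R(R/\mathfrak{m}^l)=\delta_{\bar R}(\bar R/\bar{\mathfrak{m}}^l)$ would still require a separate argument for the reverse direction even after the exactness problem is repaired.

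The paper (specializing Theorem \ref{B} to $I=\mathfrak{m}$) circumvents both difficulties. For $\index(R)\ge\reg(\gr_\mathfrak{m}(R))+1$ it replaces $R/\mathfrak{m}^l$ by the ideal $J=\mathfrak{m}^l+(x)$, on which $x$ \emph{is} regular; the splitting isomorphisms of Lemma \ref{F} (which encode the Valabrega--Valla identities you mention) provide a surjection $J\twoheadrightarrow R/\mathfrak{m}^l$ and a decomposition $J/xJ\cong R/J\oplus\mathfrak{m}^l/x\mathfrak{m}^{l-1}$, so positivity of $\delta_R(R/\mathfrak{m}^l)$ can be pushed down to $\bar R$ through $J$ rather than through $R/\mathfrak{m}^l$ itself. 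For the opposite inequality it uses the surjection $R/\mathfrak{m}^{\reg+1}\to R/(x_1,\dots,x_d)$ together with the fact that $\delta(R/Q)>0$ for a parameter ideal $Q$ of a Gorenstein ring (Theorem \ref{A}, (b)$\Rightarrow$(a)). If you want to keep your architecture, you must route the descent through $\mathfrak{m}^l+(x)$ in this way; as written, the central step of your argument does not go through.
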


The other main aim of this paper is to prove the following result.
\begin{thm} \label{B}
Let $R$ be a Cohen-Macaulay local ring having a canonical module and Krull dimension $d>0$, and $I$ be an ideal of $R$ such that $\gr_I(R)$ is a Cohen-Macaulay graded ring and $I^l/I^{l+1}$ is $R/I$-free for $1\leq l\leq \index I$. Then we have $\index I\geq \reg(\gr_I(R))+1$. The equality holds if $I\subset \mathsf{tr}(\omega)$.
\end{thm}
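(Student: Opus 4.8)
The plan is to divide out a minimal reduction of $I$ so as to reach an Artinian ring, where the $\delta$-invariant is completely transparent, and then to use a Ding-type descent to carry the computation of $\delta_R(R/I^l)$ down to that Artinian quotient; the trace hypothesis $I\subset\tr(\omega)$ is what makes the descent sharp enough to give an equality rather than merely an inequality, and it enters through the last implication of Theorem~\ref{A}. Concretely, put $r:=\reg(\gr_I(R))$. As $k$ is infinite, pick a minimal reduction $\xx=x_1,\dots,x_d$ of $I$ whose initial forms $x_1^{*},\dots,x_d^{*}$ form a homogeneous system of parameters of $\gr_I(R)$; since $\gr_I(R)$ is Cohen--Macaulay these initial forms are a regular sequence, so $\xx$ is an $R$-regular sequence, $I^n\cap(\xx)=\xx\,I^{n-1}$ for every $n\geq1$, and, writing $\bar R:=R/(\xx)$ and $\bar I:=I\bar R$, one has $\gr_{\bar I}(\bar R)\cong\gr_I(R)/(x_1^{*},\dots,x_d^{*})$. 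The latter is a standard graded \emph{Artinian} algebra, and since Castelnuovo--Mumford regularity is unchanged upon passing modulo a regular sequence of linear forms, $r=\reg\gr_{\bar I}(\bar R)=\max\{\,n:[\gr_{\bar I}(\bar R)]_n\neq0\,\}$. Equivalently $\bar I^{\,r+1}=0\neq\bar I^{\,r}$; that is, the reduction number $r_{\xx}(I)$ equals $r$ (so in particular $r$ does not depend on the chosen minimal reduction) and $I^{\,r+1}=\xx\,I^{\,r}\subset(\xx)$.

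Next comes the Artinian input. Let $A$ be an Artinian local ring with canonical module $\omega_A$. By Bass's theorem every nonzero $A$-module of finite injective dimension has injective dimension $\depth A=0$, hence is injective, hence is a direct sum of copies of $\omega_A=E_A(A/\mathfrak m_A)$. Thus $\Ext^1_A(-,\omega_A)=0$, so in any Cohen--Macaulay approximation $0\to Y\to X\to N\to0$ the submodule $Y$ is injective and the sequence splits, $X\cong Y\oplus N$; taking $Y$ minimal (hence $Y=0$) gives $\delta_A(N)=$ the rank of a maximal free direct summand of $N$. In particular $\delta_{\bar R}(\bar R/\bar I^{\,n})=1$ when $\bar I^{\,n}=0$ and $\delta_{\bar R}(\bar R/\bar I^{\,n})=0$ otherwise, so $\index\bar I=\min\{\,n:\bar I^{\,n}=0\,\}=r+1$.

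It then remains to compare $\delta_R(R/I^{\,n})$ with $\delta_{\bar R}(\bar R/\bar I^{\,n})$, and I would do this by induction on $d$, dividing out one $R$-regular superficial element $x\in I$ at a time. Because $x^{*}$ is regular on $\gr_I(R)$ one has $(I^{\,n}:_Rx)=I^{\,n-1}$, giving the exact sequence
\[
0\longrightarrow R/I^{\,n-1}\xrightarrow{\ \cdot x\ }R/I^{\,n}\longrightarrow (R/xR)\big/(I/xR)^{\,n}\longrightarrow0 ,
\]
and the freeness of each $I^{\,l}/I^{\,l+1}$ over $R/I$ is exactly what keeps the minimal Cohen--Macaulay approximations under control along this sequence, in the spirit of Ding's descent argument \cite{D}. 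Unconditionally this gives $\delta_R(R/I^{\,n})\le\delta_{\bar R}(\bar R/\bar I^{\,n})$, so $\delta_R(R/I^{\,n})=0$ for $1\le n\le r$, i.e.\ $\index I\ge r+1$. When in addition $I\subset\tr(\omega)$, one has $(\xx)\subseteq I\subseteq\tr(\omega)$, and the last implication of Theorem~\ref{A} (which produces a free summand out of $\delta$ of $R/(\text{parameter ideal})$ under the trace condition, and which remains available in the intermediate rings occurring in the induction, all of positive dimension until the final step) upgrades the descent to an equality at $n=r+1$, so that $\delta_R(R/I^{\,r+1})=\delta_{\bar R}(\bar R/\bar I^{\,r+1})=1$, whence $\index I=r+1$.

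The crux — and the expected main obstacle — is the descent of the previous paragraph. Ding's lemma is classically established only for the maximal ideal, and it hinges on a module of finite injective dimension behaving well after a regular element is killed; for a general $\mathfrak m$-primary $I$ one must instead follow the modules $(\xx)/I^{\,n}$ and the relevant Ext and Tor terms through the reduction, and it is here that the freeness of the $I^{\,l}/I^{\,l+1}$ — and, for the exact value rather than a bound, the hypothesis $I\subset\tr(\omega)$ — are genuinely needed. A comparatively routine point, used in the Artinian step, is the identification $r_{\xx}(I)=\reg\gr_I(R)$ together with its independence of the minimal reduction, which is standard once $\gr_I(R)$ is Cohen--Macaulay.
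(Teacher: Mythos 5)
Your overall strategy coincides with the paper's: choose a regular sequence $x_1,\dots,x_d$ in $I$ whose initial forms are a homogeneous system of parameters of $\gr_I(R)$, identify $\reg(\gr_I(R))$ with the top nonvanishing degree of the Artinian ring $\gr_{I'}(R')$ (equivalently with $\max\{p\mid I^p\not\subset(x_1,\dots,x_d)\}$), and obtain the equality under $I\subset\mathsf{tr}(\omega)$ from Theorem \ref{A} (b)$\Rightarrow$(a) applied to the parameter ideal $(x_1,\dots,x_d)\subset I\subset\mathsf{tr}(\omega)$ together with the surjection $R/I^p\to R/(x_1,\dots,x_d)$. All of that matches the paper and is sound, as is your computation of $\delta$ over an Artinian ring.

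The gap is exactly where you flag it: the descent inequality $\delta_R(R/I^n)\le\delta_{R'}(R'/I'^n)$ is asserted, not proved, and the tool you propose for it does not work as stated. The reduction lemma for $\delta$ (Lemma \ref{E}, giving $\delta_R(M)\le\delta_{R/(x)}(M/xM)$) requires $x$ to be regular on $M$, so it cannot be applied to $M=R/I^n$, on which $x\in I$ is a zerodivisor; and $\delta$ is not monotone or additive along an arbitrary short exact sequence, so the sequence $0\to R/I^{n-1}\xrightarrow{x}R/I^n\to \overline{R}/\overline{I}^{\,n}\to0$ by itself gives no control on $\delta_R(R/I^n)$. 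The paper's way around this is to replace $R/I^p$ by the maximal Cohen--Macaulay module $J=I^p+(x)$: there is a surjection $J\to R/I^p$ (Lemma \ref{F}(4)), so $\delta_R(J)\ge\delta_R(R/I^p)$; $x$ is regular on $J$, so Lemma \ref{E} applies to give $\delta_{\overline{R}}(J/xJ)>0$; and Lemma \ref{F}(5) splits $J/xJ\cong R/(I^p+(x))\oplus I^p/xI^{p-1}$, after which the unwanted summand $\delta_{\overline{R}}(I^p/xI^{p-1})=\delta_{\overline{R}}(\overline{I}^{\,p})$ is killed by Theorem \ref{A} (for $d>1$) or a direct Artinian argument (for $d=1$), leaving $\delta_{\overline{R}}(\overline{R}/\overline{I}^{\,p})>0$ and hence, by induction on $d$, $I^p\subset(x_1,\dots,x_d)$. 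This splitting --- which is where the freeness of the $I^l/I^{l+1}$ actually enters, via Lemma \ref{F} --- is the missing idea; without it (or a proved substitute), your argument establishes the routine reductions at both ends but not the central implication $\delta_R(R/I^p)>0\Rightarrow I^p\subset(x_1,\dots,x_d)$ on which the inequality $\index I\ge\reg(\gr_I(R))+1$ rests.
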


Note that this theorem recovera Theorem \ref{B0} by letting $I=\mathfrak{m}$.

There are some examples of ideals which satisfy the whole conditions in Theorem \ref{A} and \ref{B}. One of them is the maximal ideal $\mathfrak{m}$ in the case that $\gr_\mathfrak{m}(R)$ is Cohen-Macaulay (for example, $R$ is a hypersurface or a localization of a homogeneous graded ring.)

Other interesting examples are Ulrich ideals. These ideals are defined in \cite{GOTWY} and many examples of Ulrich ideals are given in \cite{GOTWY} and \cite{GOTWY2}. We shall show in Section 3 that Ulrich ideals satisfy the assumption of Theorem \ref{A} and \ref{B}.

We have an application of Theorem \ref{A} concerning Ulrich ideals as follows.

\begin{cor} \label{AAA}
Let $I$ be an Ulrich ideal of $R$ that is not a parameter ideal. Assume that $R$ is Gorenstein on the punctured spectrum. Then $I\not\subset \mathfrak{m}^{\index (R)}$. In particular, $\sup\{n \mid I\subset \mathfrak{m}^n\textrm{ for any Ulrich ideal $I$ that is not a parameter ideal}\}$ is finite.
\end{cor}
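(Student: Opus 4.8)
The plan is to deduce Corollary~\ref{AAA} directly from Theorem~\ref{A}, using the known structural facts about Ulrich ideals. First I would recall (to be established in Section~3) that an Ulrich ideal $I$ satisfies the hypotheses of Theorem~\ref{A}: in particular $I/I^2$ is a free $R/I$-module, so all the implications of that theorem are available for $I$. Since $I$ is assumed \emph{not} to be a parameter ideal, condition~(b) of Theorem~\ref{A} fails, and therefore the implication (a)~$\Rightarrow$~(b) forces condition~(a) to fail as well; that is, $\delta_R(R/I) = 0$.

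Next I would bring in the hypothesis that $R$ is Gorenstein on the punctured spectrum. The point of this hypothesis is to make the final clause of Theorem~\ref{A} vacuously usable in the contrapositive: one knows that when $R$ is Gorenstein on the punctured spectrum, the trace ideal $\tr(\omega)$ is $\mathfrak{m}$-primary (indeed $R_\mathfrak{p}$ Gorenstein means $\omega_\mathfrak{p}\cong R_\mathfrak{p}$, so $\tr(\omega)_\mathfrak{p}=R_\mathfrak{p}$ for every non-maximal prime $\mathfrak{p}$, whence $\v(\tr(\omega))\subseteq\{\mathfrak{m}\}$). Moreover $\tr(\omega)\supseteq\mathfrak{m}^{N}$ for some $N$, and in fact the relevant sharp statement is that $\mathfrak{m}^{\index(R)}\subseteq\tr(\omega)$ — this is exactly the content relating the index of $R$ to Cohen--Macaulay approximation, since $\delta_R(R/\mathfrak{m}^{\index(R)})=1$ means the approximation of $R/\mathfrak{m}^{\index(R)}$ has a free summand, which translates into $\mathfrak{m}^{\index(R)}$ lying in the trace ideal of $\omega$. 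I would cite or reprove this containment as the key lemma feeding the corollary.

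Now suppose, for contradiction, that $I\subset\mathfrak{m}^{\index(R)}$. Combining with the previous paragraph, $I\subset\mathfrak{m}^{\index(R)}\subseteq\tr(\omega)$, so the hypothesis $I\subset\tr(\omega)$ of the last sentence of Theorem~\ref{A} is satisfied. That theorem then gives the implication (b)~$\Rightarrow$~(a); but we already showed (a) fails, hence (b) fails, i.e.\ $I$ is not a parameter ideal — which is consistent, so I need the implication in the other direction. The correct logical route is: Theorem~\ref{A} always gives (a)~$\Rightarrow$~(b), and under $I\subset\tr(\omega)$ it \emph{also} gives (b)~$\Rightarrow$~(a), so for such $I$ the conditions (a) and (b) are equivalent. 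Since $I$ is not a parameter ideal, (b) fails, so (a) fails, $\delta_R(R/I)=0$ — again consistent, no contradiction yet. The contradiction must instead come from combining non-parameter-ness with a \emph{positive} $\delta$: so I would instead argue that an Ulrich ideal which is not a parameter ideal forces some $\delta^n(R/I^l)>0$ via condition (d) together with the Ulrich condition (which guarantees $\gr_I(R)$ is Cohen--Macaulay and each $I^i/I^{i+1}$ is $R/I$-free, so (d)~$\Rightarrow$~(c)~$\Leftrightarrow$~(b) applies), yielding that $I$ \emph{is} a parameter ideal — contradiction. Hence the assumption $I\subset\mathfrak{m}^{\index(R)}$ is untenable, giving $I\not\subset\mathfrak{m}^{\index(R)}$.

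Finally, the "in particular" statement follows formally: if $I\not\subset\mathfrak{m}^{\index(R)}$ for every non-parameter Ulrich ideal $I$, then $\sup\{n : I\subset\mathfrak{m}^n \text{ for all such } I\}\le \index(R)-1<\infty$. The main obstacle I anticipate is pinning down the precise mechanism by which "not a parameter ideal" produces a positive higher delta invariant — i.e.\ getting the implication to run toward condition (b) rather than away from it; this requires using the Ulrich structure to verify \emph{all} the side hypotheses in Theorem~\ref{A} (the freeness of every $I^i/I^{i+1}$ over $R/I$ and $\depth\gr_I(R)\ge d-1$) and, crucially, establishing the containment $\mathfrak{m}^{\index(R)}\subseteq\tr(\omega)$ when $R$ is Gorenstein on the punctured spectrum, which is where the Gorenstein-on-the-punctured-spectrum hypothesis does its real work.
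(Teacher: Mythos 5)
Your first step is right and matches the paper: since $I$ is not a parameter ideal, the contrapositive of (a) $\Rightarrow$ (b) in Theorem \ref{A} gives $\delta_R(R/I)=0$. But the second half of your argument does not close. The step you propose --- that a non-parameter Ulrich ideal ``forces some $\delta^n(R/I^l)>0$ via condition (d)'' --- runs exactly backwards: under the Ulrich hypotheses Theorem \ref{A} gives (d) $\Rightarrow$ (b), so its contrapositive says that for a non-parameter Ulrich ideal \emph{all} the invariants $\delta^n(R/I^l)$ with $n>0$ vanish. If your claimed step were provable, it would show that every Ulrich ideal is a parameter ideal, independently of the assumption $I\subset\mathfrak{m}^{\index(R)}$, which is absurd. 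Moreover, you never actually use the containment $I\subset\mathfrak{m}^{\index(R)}$ to produce anything: the detour through $\mathsf{tr}(\omega)$ only yields the equivalence (a) $\Leftrightarrow$ (b) for $I$, which, as you yourself observe, is consistent with $\delta(R/I)=0$ and gives no contradiction.

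The missing idea is much simpler and is the one the paper uses: the $\delta$-invariant is monotone under surjections. If $I\subset\mathfrak{m}^{\index(R)}$, there is a surjection $R/I\twoheadrightarrow R/\mathfrak{m}^{\index(R)}$, hence $\delta(R/I)\geq\delta(R/\mathfrak{m}^{\index(R)})=1>0$ by the very definition of the index, contradicting $\delta(R/I)=0$. The hypothesis that $R$ is Gorenstein on the punctured spectrum enters not through the trace ideal of $\omega$ but through Ding's theorem \cite[Theorem 1.1]{D2}, which guarantees that $\index(R)$ is finite; this is what makes $\mathfrak{m}^{\index(R)}$ a genuine $\mathfrak{m}$-primary ideal and yields the bound $\sup\{n\}\leq\index(R)-1<\infty$ in the ``in particular'' clause (that last formal deduction you do carry out correctly).
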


We prove this result in Section 3.

\section{Proofs}
Throughout this section, let $(R,\mathfrak{m}, k)$ be a Cohen-Macaulay local ring of dimension $d>0$ with a canonical module $\omega$, and assume that $k$ is infinite. We recall some basic properties of the Auslander $\delta$-invariant.

For a finitely generated $R$-module $M$, a short exact sequence
\begin{equation} \label{C}
0 \to Y \to X \xrightarrow[]{p} M \to 0 
\end{equation}
is called a Cohen-Macaulay approximation of $M$ if $X$ is a maximal Cohen-Macaulay $R$-module and $Y$ has finite injective dimension over $R$. We say that the sequence (\ref{C}) is minimal if each endomorphism $\phi$ of $X$ with $p\circ\phi=p$ is an automorphism of $X$. It is known (see \cite{AB}, \cite{HS}) that a minimal Cohen-Macaulay approximation of $M$ exists and is unique up to isomorphism.

If the sequence (\ref{C}) is a minimal Cohen-Macaulay approximation of $M$, then we define the (Auslander) $\delta$-invariant $\delta(M)$ of $M$ as the maximal rank of a free direct summand of $X$. We denote by $\delta^n(M)$ the $\delta$-invariant of $n$-th syzygy $\syz^n M$ of $M$ in the minimal free resolution for $n\geq 0$.

\begin{lem} \label{D}
Let $N$ be a maximal Cohen-Macaulay $R$-module. Then $\delta^1(N)=0$. In particular, $\delta^n(M)=0$ for $n\geq d+1$ and any finitely generated $R$-module $M$.
\end{lem}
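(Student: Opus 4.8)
The plan is to prove that $\syz^1 N$ has positive $\delta$-invariant impossible, i.e. $\delta^1(N) = 0$, by analyzing the minimal Cohen–Macaulay approximation of the first syzygy of a maximal Cohen–Macaulay module $N$. First I would recall the standard fact that if $F \to N$ is a surjection from a free module $F$ with kernel $\syz^1 N$, then since $N$ is already maximal Cohen–Macaulay, there is a natural Cohen–Macaulay approximation of $\syz^1 N$ coming from this presentation; more precisely, one uses that the minimal Cohen–Macaulay approximation $0 \to Y \to X \to \syz^1 N \to 0$ has $X$ with no free summand. The key point is that a free summand of $X$ mapping onto a free summand would split off and contradict minimality, or more directly, one invokes the known behavior of $\delta$ under syzygy: for any module $M$, the module $\syz^1 M$ is a first syzygy, hence has no free summand in its minimal Cohen–Macaulay approximation when the original module is already maximal Cohen–Macaulay. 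Concretely, if $N$ is maximal Cohen–Macaulay then $\syz^1 N$ sits in $0 \to \syz^1 N \to R^b \to N \to 0$, and since $\depth$ considerations force $\syz^1 N$ to be maximal Cohen–Macaulay too (as $d > 0$), the sequence $0 \to 0 \to \syz^1 N \xrightarrow{\mathrm{id}} \syz^1 N \to 0$ is itself a Cohen–Macaulay approximation, but not necessarily the minimal one; the minimal one is obtained by removing free summands, and I would argue the free rank removed equals $\delta(\syz^1 N) = \delta^1(N)$, which must vanish because any free summand of $\syz^1 N$ would, after pushing forward through the inclusion into $R^b$, violate minimality of the free resolution of $N$.

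More carefully, the cleanest route: let $0 \to Y \to X \xrightarrow{p} \syz^1 N \to 0$ be the minimal Cohen–Macaulay approximation. Write $X = X' \oplus R^{t}$ where $R^t$ is the maximal free summand, so $\delta^1(N) = t$ if $X'$ has no free summand — actually $\delta(\syz^1 N)$ is defined as this maximal free rank $t$. Compose with the inclusion $\syz^1 N \hookrightarrow R^b$ (from the minimal free presentation $R^b \to N$) to get $X \to R^b$. I would then show that the restriction $R^t \to R^b$ of this composite, followed by the fact that $\syz^1 N \subseteq \mathfrak{m} R^b$ (minimality of the resolution), forces the image of $R^t$ to lie in $\mathfrak{m} R^b$, so the map $R^t \to R^b$ is given by a matrix with entries in $\mathfrak{m}$. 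Combined with $Y$ having finite injective dimension (hence no free summands of $X$ can come entirely from $Y$) and the minimality condition on $p$, this should yield $t = 0$. The standard reference argument (in Auslander–Buchweitz or Yoshino's work) is exactly this, so I would cite it but also sketch the matrix-in-$\mathfrak{m}$ observation as the crux.

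For the "in particular" statement, I would argue as follows: for any finitely generated $M$ and $n \geq d$, the syzygy $\syz^n M$ is a maximal Cohen–Macaulay $R$-module (by the depth lemma / Auslander–Buchsbaum-type reasoning, since syzygies pick up depth and stabilize at $\depth R = d$). Hence for $n \geq d+1$, writing $n = m+1$ with $m \geq d$, we have $\syz^n M = \syz^1(\syz^m M)$ with $\syz^m M$ maximal Cohen–Macaulay, so $\delta^n(M) = \delta^1(\syz^m M) = 0$ by the first part.

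The main obstacle I anticipate is making the minimality argument fully rigorous: one must carefully use the uniqueness and minimality of the Cohen–Macaulay approximation to conclude that a free summand of $\syz^1 N$ cannot survive, and this requires knowing that the approximation of a module with no free summand (which $\syz^1 N$ is, being a first syzygy in a minimal resolution when $N$ is MCM) itself has $X$ with no free summand — equivalently that $\delta$ of a module with no free summand and finite injective dimension behaves correctly. The subtle point is distinguishing "$\syz^1 N$ has no free summand" from "$X$ has no free summand"; the bridge is that for a maximal Cohen–Macaulay module $L$ with no free summand, its minimal CM approximation is $0 \to 0 \to L \to L \to 0$ only if $L$ has finite injective dimension, which is generally false, so instead one needs the genuine structure theory. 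I expect the proof to reduce to citing \cite{Y} or \cite{AB} for the statement that $\delta(\syz^1 N) = 0$ when $N$ is maximal Cohen–Macaulay, after establishing that first syzygies have no free direct summands.
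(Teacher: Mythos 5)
Your reduction of the ``in particular'' statement to the first part is correct and matches the paper: for $n\geq d+1$ one has $\syz^n M=\syz^1(\syz^{n-1}M)$ with $\syz^{n-1}M$ maximal Cohen--Macaulay. Also, the worry in your final paragraph is unfounded: for a maximal Cohen--Macaulay module $L$ the sequence $0\to 0\to L\xrightarrow{\mathrm{id}}L\to 0$ is \emph{always} the minimal Cohen--Macaulay approximation ($Y=0$ has finite injective dimension, and any $\phi$ with $\mathrm{id}\circ\phi=\mathrm{id}$ is the identity), so $\delta(L)$ is simply the rank of the maximal free summand of $L$; no finiteness of injective dimension of $L$ is involved. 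Thus the lemma is indeed equivalent to the assertion that $\syz^1N$ has no free direct summand when $N$ is maximal Cohen--Macaulay.

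The genuine gap is precisely at the crux you flag. From $\syz^1N=X'\oplus R^{t}\subseteq\mathfrak{m}R^{b}$ you conclude that the composite $R^{t}\to R^{b}$ is given by a matrix with entries in $\mathfrak{m}$, and assert this ``should yield $t=0$.'' It does not: an injective map $R\to R^{b}$ with entries in $\mathfrak{m}$ whose image is a direct summand of a submodule of $\mathfrak{m}R^{b}$ is not by itself contradictory. For instance, over a discrete valuation ring $\syz^1k=\mathfrak{m}\cong R$ sits inside $\mathfrak{m}R$ via the entry $t\in\mathfrak{m}$ and is free; no contradiction arises there because $k$ is not maximal Cohen--Macaulay. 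So the hypothesis on $N$ must enter the contradiction itself, and your sketch never uses it beyond noting that $\syz^1N$ is MCM. The minimality of the resolution only bites once one knows that $\tau\colon R\to R^{b}$ is a \emph{split} injection, since a split injection into a free module cannot have all entries in $\mathfrak{m}$. Producing that splitting is the content of the paper's proof: by Takahashi's lemma \cite[Lemma 3.1]{T} the extension $0\to X'\oplus R\to R^{b}\to N\to 0$ is rearranged into $0\to R\xrightarrow{\tau}R^{b}\to B\to 0$ and $0\to R^{b}\to A\oplus B\to N\to 0$; the second sequence and the MCM hypothesis on $N$ force $B$ to be maximal Cohen--Macaulay, the first shows $B$ has finite projective dimension, hence $B$ is free and $\tau$ splits --- contradiction. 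Some argument of this kind is indispensable, and ending by citing the statement itself from \cite{Y} or \cite{AB} is circular; as it stands the proposal does not prove the lemma.
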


\begin{proof}
Suppose that $\delta^1(N)>0$. Then $\syz N$ has a free direct summand. Let $\syz N=X\oplus R$. There is a short exact sequence $0 \to X\oplus R \xrightarrow[]{(\sigma, \tau)^T} R^{\oplus m} \xrightarrow[]{\pi} N \to 0$. According to \cite[Lemma 3.1]{T}, there exist exact sequences 
\begin{align}
\label{D1} 0 \to R \xrightarrow[]{\tau} R^{\oplus m} \to B \to 0,\\
\label{D2} 0\to R^{\oplus m} \to A\oplus B \to N \to 0
\end{align}
for some $R$-modules $A,B$. By the sequence (\ref{D2}), $B$ is a maximal Cohen-Macaulay $R$-module. In view of (\ref{D1}), $B$ is a free $R$-module provided that $B$ has finite projective dimension from (\ref{D1}). Then, the sequence (\ref{D1}) splits and $\tau$ has a left inverse map. This contradicts that the map $\pi$ is minimal.
\end{proof}

We now remark on $\delta$-invariants under reduction by a regular element. The following lemma is shown in \cite[Corollary 2.5]{K}.

\begin{lem} \label{E}
Let $M$ be a finitely generated $R$-module and $x\in\mathfrak{m}$ be a regular element on $M$ and $R$. If $0 \to Y \to X \to M \to 0$ is an minimal Cohen-Macaulay approximation of $M$, then
\[
0 \to Y/xY \to X/xX \to M/xM \to 0
\]
is a minimal Cohen-Macaulay approximation of $M/xM$ over $R/(x)$. In particular, it holds that $\delta_R(M)\leq \delta_{R/(x)}(M/xM)$.
\end{lem}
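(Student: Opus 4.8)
The plan is to verify the four defining properties of a minimal Cohen-Macaulay approximation for the reduced sequence over $R/(x)$: exactness, that the middle term is maximal Cohen-Macaulay, that the left term has finite injective dimension, and minimality. (We may assume $M\neq 0$, so that $X\neq 0$.) The starting point is that $x$ is a regular element on $X$ and on $Y$: since $X$ is maximal Cohen-Macaulay and $R$ is Cohen-Macaulay — hence catenary and equidimensional — every associated prime $\p$ of $X$ satisfies $\dim R/\p=d$, so $\operatorname{Ass}_R X\subseteq\{\p\mid\dim R/\p=d\}=\operatorname{Min}R=\operatorname{Ass}_R R$; as $x$ is $R$-regular it avoids every $\p\in\operatorname{Ass}_R R$, hence is $X$-regular, and then $Y$-regular too because $Y\subseteq X$. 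Granting this: exactness of $0\to Y/xY\to X/xX\to M/xM\to 0$ follows by tensoring the original sequence with $R/(x)$, using $\operatorname{Tor}_1^R(M,R/(x))=0$ ($x$ being $M$-regular); the middle term satisfies $\depth_{R/(x)}(X/xX)=\depth_R X-1=d-1=\dim R/(x)$, so it is maximal Cohen-Macaulay over $R/(x)$, which is Cohen-Macaulay with canonical module $\omega/x\omega$; and $\id_{R/(x)}(Y/xY)=\id_R Y-1<\infty$ when $Y\neq 0$ (the case $Y=0$ being trivial). So the reduced sequence is a Cohen-Macaulay approximation of $M/xM$ over $R/(x)$.

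The main point, and the step I expect to demand the most care, is minimality. The key input is the vanishing $\Ext^i_R(X,Y)=0$ for all $i>0$, valid because $X$ is maximal Cohen-Macaulay and $Y$ has finite injective dimension (see \cite{AB}: a module $Y$ of finite injective dimension has a finite resolution $0\to\omega^{b_n}\to\cdots\to\omega^{b_0}\to Y\to 0$ by finite direct sums of $\omega$, namely $Y\cong\omega\otimes_R\Hom_R(\omega,Y)$ with $\Hom_R(\omega,Y)$ of finite projective dimension, while $\Ext^{>0}_R(X,\omega)=0$ for $X$ maximal Cohen-Macaulay). Applying $\Hom_R(X,-)$ to $0\to Y\xrightarrow{x}Y\to Y/xY\to 0$ then produces a natural isomorphism
\[
\Hom_R(X,Y)/x\Hom_R(X,Y)\ \xrightarrow{\ \sim\ }\ \Hom_R(X,Y/xY)=\Hom_{R/(x)}(X/xX,Y/xY).
\]
Now if $\bar\phi$ is an endomorphism of $X/xX$ with $\bar p\circ\bar\phi=\bar p$ (where $\bar p$ is the induced surjection), exactness of the reduced sequence lets one write $\bar\phi=1+\bar\iota\circ\bar\psi$ with $\bar\psi\colon X/xX\to Y/xY$ (and $\bar\iota$ the induced inclusion), and by the isomorphism above $\bar\psi$ lifts to some $\psi\colon X\to Y$. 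Then $\phi:=1_X+\iota\circ\psi$ satisfies $p\circ\phi=p$, so $\phi$ is an automorphism of $X$ by minimality of the original approximation, and reducing modulo $x$ shows $\bar\phi$ is an automorphism of $X/xX$. Hence the reduced sequence is the minimal Cohen-Macaulay approximation of $M/xM$ over $R/(x)$.

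Finally, the inequality: writing $X\cong R^{\oplus\delta_R(M)}\oplus X'$ as in the minimal Cohen-Macaulay approximation of $M$, the reduction $X/xX\cong (R/(x))^{\oplus\delta_R(M)}\oplus(X'/xX')$ exhibits a free direct summand of rank $\delta_R(M)$ in the middle term of the minimal Cohen-Macaulay approximation of $M/xM$, whence $\delta_{R/(x)}(M/xM)\geq\delta_R(M)$. The degenerate situations cause no difficulty: $Y=0$ is immediate, and when $d=1$ the ring $R/(x)$ is Artinian but the computations above (notably $\depth_{R/(x)}(X/xX)=0$ and $\id_{R/(x)}(Y/xY)=0$) remain valid. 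The only genuinely nonformal ingredient is the $\Ext$-vanishing used to lift $\bar\phi$ back to $R$; everything else is bookkeeping with the maps $\iota$, $p$ and their reductions.
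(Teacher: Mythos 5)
Your proof is correct. Note that the paper does not prove this lemma at all: it simply cites \cite[Corollary 2.5]{K}, so there is no in-paper argument to compare against, and your write-up serves as a legitimate self-contained replacement. All four verifications go through: $x$ is $X$- and $Y$-regular for the reasons you give (every associated prime of a maximal Cohen--Macaulay module over a Cohen--Macaulay ring is minimal, hence an associated prime of $R$); the $\operatorname{\mathsf{Tor}}$ argument gives exactness; the depth and injective-dimension counts identify $X/xX$ as maximal Cohen--Macaulay and $Y/xY$ as of finite injective dimension over $R/(x)$. You correctly isolate the one nontrivial point, namely minimality, and the mechanism you use --- the vanishing $\Ext_R^{i}(X,Y)=0$ for $i>0$ (a standard consequence of the finite $\omega$-resolution of $Y$ together with $\Ext_R^{>0}(X,\omega)=0$), which makes $\Hom_R(X,Y)/x\Hom_R(X,Y)\to\Hom_{R/(x)}(X/xX,Y/xY)$ an isomorphism and lets you lift an endomorphism $\bar\phi=1+\bar\iota\bar\psi$ with $\bar p\bar\phi=\bar p$ to an endomorphism $\phi=1+\iota\psi$ with $p\phi=p$ --- is exactly the right tool, and is essentially the argument in the cited reference. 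The only points worth polishing are cosmetic: you should say explicitly that $\bar\phi-1$ factors through $\Ker\bar p=\operatorname{\mathsf{im}}\bar\iota$ because $\bar p(\bar\phi-1)=0$ and $\bar\iota$ is injective (this uses the exactness you established first), and that the reduction of the lifted $\phi$ modulo $x$ really is $\bar\phi$ under the identification you set up; both are immediate. The final inequality $\delta_R(M)\leq\delta_{R/(x)}(M/xM)$ then follows as you say, since a rank-$\delta_R(M)$ free summand of $X$ reduces to a free summand of the middle term of the minimal approximation of $M/xM$.
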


In the proofs of our theorems, the following lemma plays a key role. We remark that in the case $I=\mathfrak{m}$, similar statements are shown in \cite{D} and \cite{Y}.
\begin{lem} \label{F}
Let $l>0$ be an integer, $I$ be an $\mathfrak{m}$-primary ideal of $R$ and $x\in I\setminus I^2$ be an $R$-regular element. Assume that $I^i/I^{i+1}$ is a free $R/I$-module for any $1\leq i\leq l$ and the multiplication map $x:I^{i-1}/I^i\to I^i/I^{i+1}$ is injective for any $1\leq i\leq l$, where we set $I^0=R$. Then the following hold.
\begin{itemize}
\item[(1)] $xI^i=(x)\cap I^{i+1}$ for all $0\leq i\leq l$.
\item[(2)] $I^i/I^{i+1}\cong I^{i-1}/I^i\oplus I^i/(xI^{i-1}+I^{i+1})$ for all $1\leq i\leq l$.
\item[(3)] $I^i/xI^i\cong I^{i-1}/I^i\oplus I^i/xI^{i-1}$ for all $1\leq i\leq l$.
\item[(4)] $(I^i+(x))/xI^i\cong R/I^i\oplus I^i/xI^{i-1}$ for all $1\leq i\leq l$.
\item[(5)] $(I^i+(x))/x(I^i+(x))\cong R/(I^i+(x))\oplus I^i/xI^{i-1}$ for all $1\leq i\leq l$.
\end{itemize}
\end{lem}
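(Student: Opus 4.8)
The plan is to prove (1) and (2) first, where the real content sits, and then to deduce (3), (4) and (5) from them by lifting the splitting of (2) from $I^i/I^{i+1}$ up to $I^i$ itself. I expect the splitting in (2) to be the only step that is not routine bookkeeping.

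For (1) I would induct on $i$, the case $i=0$ being immediate since $x\in I$ forces $(x)=xR\subseteq I$, so $(x)\cap I=(x)=xI^0$. For the step, fix $i$ with $1\le i\le l$, assume $(x)\cap I^i=xI^{i-1}$, and take $a\in(x)\cap I^{i+1}$, say $a=xr$. As $I^{i+1}\subseteq I^i$, the inductive hypothesis gives $xr\in(x)\cap I^i=xI^{i-1}$, hence $r\in I^{i-1}$ because $x$ is $R$-regular. Multiplication by $x$ sends the class $\bar r\in I^{i-1}/I^i$ to the class of $xr=a$ in $I^i/I^{i+1}$, which vanishes since $a\in I^{i+1}$; the injectivity hypothesis then forces $\bar r=0$, i.e.\ $r\in I^i$, so $a=xr\in xI^i$. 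This is precisely where the injectivity hypothesis is used.

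For (2) the key point is the elementary fact that \emph{an injective homomorphism between finitely generated free modules over an Artinian local ring $(A,\mathfrak n,\kappa)$ is split injective}, equivalently has $A$-free cokernel. Indeed, were $\phi\colon A^p\to A^q$ not split, $\phi\otimes_A\kappa$ would not be injective, so a nonzero $\bar v$ with $(\phi\otimes_A\kappa)(\bar v)=0$ would lift to some $v\in A^p\setminus\mathfrak nA^p$, which is part of an $A$-basis; then $\phi$ would restrict to an injection $Av\cong A\hookrightarrow A^q$ with $\phi(v)\in\mathfrak nA^q$, and any nonzero element $s$ of the socle of $A$ (nonzero, as $A$ is Artinian) would satisfy $s\phi(v)\in s\mathfrak nA^q=0$, hence $\phi(sv)=0$, hence $sv=0$, contradicting $\operatorname{Ann}_A(v)=0$. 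I would apply this with $A=R/I$ to the injective $R/I$-linear map $x\colon I^{i-1}/I^i\to I^i/I^{i+1}$ --- whose source is $R/I$-free ($R/I$ itself if $i=1$, and $I^{i-1}/I^i$ with $1\le i-1\le l$ if $i\ge2$) and whose target is $R/I$-free by hypothesis --- to obtain a free $R/I$-module $C$ with $I^i/I^{i+1}=\operatorname{im}(x)\oplus C$. Since multiplication by $x$ identifies $I^{i-1}/I^i$ with $\operatorname{im}(x)$ while the cokernel of $x$ is $I^i/(xI^{i-1}+I^{i+1})$, this is exactly (2); note in particular that $I^i/(xI^{i-1}+I^{i+1})$ comes out $R/I$-free, which I use below.

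For (3)--(5) I would take $\tilde C\subseteq I^i$ to be the full preimage of $C$ under $I^i\twoheadrightarrow I^i/I^{i+1}$: an $R$-submodule with $I^{i+1}\subseteq\tilde C$ and, by the decomposition in (2), $I^i=xI^{i-1}+\tilde C$. From $C\cap\operatorname{im}(x)=0$ we get $xI^{i-1}\cap\tilde C\subseteq I^{i+1}$, and then (1) gives $xI^{i-1}\cap\tilde C=xI^{i-1}\cap I^{i+1}=(x)\cap I^{i+1}=xI^i$; consequently $I^i/xI^{i-1}\cong\tilde C/(xI^{i-1}\cap\tilde C)=\tilde C/xI^i$. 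Since $xI^i$ lies in both $xI^{i-1}$ and $\tilde C$ and $xI^{i-1}/xI^i\cong I^{i-1}/I^i$ ($x$ being regular), we obtain the internal direct sum $I^i/xI^i=xI^{i-1}/xI^i\oplus\tilde C/xI^i\cong I^{i-1}/I^i\oplus I^i/xI^{i-1}$, which is (3). For (4): $I^i+(x)=xR+\tilde C$ and, by (1) again, $xR\cap\tilde C=(x)\cap I^i\cap\tilde C=xI^{i-1}\cap\tilde C=xI^i$, so $(I^i+(x))/xI^i=xR/xI^i\oplus\tilde C/xI^i\cong R/I^i\oplus I^i/xI^{i-1}$ (using $xR/xI^i\cong R/I^i$, again since $x$ is regular). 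For (5), I would divide (4) by the image of $x(I^i+(x))=x^2R+x\tilde C$; because $\tilde C\subseteq I^i$ gives $x\tilde C\subseteq xI^i$, this image equals $(x^2R+xI^i)/xI^i$, which sits inside the summand $xR/xI^i\cong R/I^i$ and corresponds there to $((x)+I^i)/I^i$, so that quotienting replaces that summand by $R/(I^i+(x))$ and leaves $\tilde C/xI^i\cong I^i/xI^{i-1}$ untouched, yielding (5). Among (3)--(5) the bookkeeping in (5) is the most delicate, but the genuine obstacle in the lemma is the splitting in (2).
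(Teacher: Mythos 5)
Your proof is correct and follows essentially the same route as the paper: (1) by induction using the injectivity hypothesis, (2) via split injectivity of injections between free modules over the Artinian ring $R/I$ (which you prove rather than merely cite), and (3)--(5) by lifting the splitting of (2). The only cosmetic difference is that for (3)--(5) the paper propagates the splitting through morphisms of short exact sequences (commutative diagrams with a common split row), whereas you build the explicit complement $\tilde C$; the resulting decompositions are identical.
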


\begin{proof}
(1): We prove this by induction on $i$. If $i=0$, there is nothing to prove. Let $i>0$. The injectivity of $x:I^{i-1}/I^i\to I^i/I^{i+1}$ shows that $xI^{i-1}\cap I^{i+1}=xI^i$. By the indcution hypothesis, $xI^{i-1}=(x)\cap I^i$. Thus it is seen that $xI^i=(x)\cap I^i(x)\cap I^i\cap I^{i+1}=(x)\cap I^{i+1}$.

(2): As $R/I$ is an Artinian ring, the injective map $x:I^{i-1}/I^i\to I^i/I^{i+1}$ of free $R/I$-modules is split injective. We can also see that the cokernel of this map is $I^i/(xI^{i-1}+I^{i+1})$. Therefore we have an isomorphism $I^i/I^{i+1}\cong I^{i-1}/I^i\oplus I^i/(xI^{i-1}+I^{i+1})$.

(3): We have the following natural commutative dialgram with exact rows:
\[
\xymatrix{
0 \ar@{->}[r]& I^{i-1}/I^i \ar@{->}[d]^=\ar@{->}[r]^x& I^i/xI^i \ar@{->}[d] \ar@{->}[r]& I^i/xI^{i-1} \ar@{->}[d] \ar@{->}[r]& 0\\
0 \ar@{->}[r]& I^{i-1}/I^i \ar@{->}[r]^x& I^i/I^{i+1} \ar@{->}[r]& I^i/(xI^{-1}+I^{i+1}) \ar@{->}[r]& 0
}
\]

We already saw in (2) that the second row is a split exact sequence, and thus the first row is also a split exact sequence. Therefore we have an isomorphism $I^i/xI^i\cong I^{i-1}/I^i\oplus I^i/xI^{i-1}$.

(4): The cokernel of the multiplication map $x:R/I^i\to (I^i+(x))/xI^i$ is $(I^i+(x))/(x)=I^i/((x)\cap I^i)$, which coincides with $I^i/xI^{i-1}$ by (1). Consider the following commutative diagram with exact rows:
\[
\xymatrix{
0 \ar@{->}[r]& I^{i-1}/I^i \ar@{->}[d]^{\iota_1}\ar@{->}[r]^x& I^i/xI^i \ar@{->}[d]^{\iota_2} \ar@{->}[r]& I^i/xI^{i-1} \ar@{->}[d]^= \ar@{->}[r]& 0\\
0 \ar@{->}[r]& R/I^i \ar@{->}[r]^-x&(I^i+(x))/xI^i\ar@{->}[r]& I^i/xI^{i-1} \ar@{->}[r]& 0
}
\]
Here $\iota_1,\iota_2$ are the natural inclutions. The first row is a split exact sequence as in (3). Therefore the second row is also a split exact sequence and we have an isomorphism $(I^i+(x))/xI^i\cong R/I^i\oplus I^i/xI^{i-1}$.

(5): The cokernel of the multiplication map $x:R/(I^i+(x))\to (I^i+(x))/x(I^i+(x))$ is $(I^i+(x))/(x)=I^i/xI^{i-1}$. We can get the following commutative diagram with exact rows:
\[
\xymatrix{
0 \ar@{->}[r]& R/I^i \ar@{->}[d]^{\pi_1}\ar@{->}[r]^-x&(I^i+(x))/xI^i\ar@{->}[d]^{\pi_2}\ar@{->}[r]& I^i/xI^{i-1} \ar@{->}[d]^=\ar@{->}[r]& 0\\
0 \ar@{->}[r]& R/(I^i+(x)) \ar@{->}[r]^-x&(I^i+(x))/x(I^i+(x))\ar@{->}[r]& I^i/xI^{i-1} \ar@{->}[r]& 0
}
\]
Here $\pi_1,\pi_2$ are the natural surjections. Then we can prove (5) in a manner similar to (4).
\end{proof}

In the case that the dimension $d$ is at most $1$, the $\delta$-invariants mostly vanish.

\begin{lem} \label{G}
Assume $d\leq 1$ and $I$ is an $\mathfrak{m}$-primary ideal of $R$. If $\delta(I)>0$, then $I$ is a parameter ideal of $R$.
\end{lem}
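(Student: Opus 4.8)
The plan is as follows. Since $d>0$ is assumed throughout this section, the hypothesis $d\le 1$ forces $d=1$, so this is the only case to treat.

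The first step is to observe that $I$ is itself a maximal Cohen--Macaulay $R$-module. Because $I$ is $\mathfrak m$-primary we have $\sqrt I=\mathfrak m$, and since $R$ is Cohen--Macaulay of dimension $1$ its associated primes coincide with its minimal primes, so $\mathfrak m\notin\operatorname{\mathsf{Ass}}R$; prime avoidance then yields an $R$-regular element in $I$. Applying the depth lemma to $0\to I\to R\to R/I\to 0$ together with $\depth R=1$ and $\depth(R/I)=0$ gives $\depth I\ge 1=d$, so $I$ is maximal Cohen--Macaulay. Hence the minimal Cohen--Macaulay approximation of $I$ is the trivial sequence $0\to 0\to I\xrightarrow{\ \mathrm{id}\ }I\to 0$, and by definition $\delta(I)>0$ says precisely that $I$ has a nonzero free direct summand.

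The second step upgrades this to an isomorphism $I\cong R$. Write $I\cong R\oplus J$. Since $I$ contains a nonzerodivisor, localizing at the set of all nonzerodivisors gives $I\otimes_R Q(R)\cong Q(R)$, where $Q(R)$ denotes the total quotient ring of $R$; comparing this with $(R\oplus J)\otimes_R Q(R)\cong Q(R)\oplus(J\otimes_R Q(R))$ forces $J\otimes_R Q(R)=0$. As $J$ is a direct summand of $I\subseteq R$ it is torsion-free, hence embeds into $J\otimes_R Q(R)=0$, so $J=0$ and $I\cong R$. Finally, an ideal isomorphic to $R$ is principal, say $I=(a)$, and injectivity of the isomorphism $R\xrightarrow{\ \sim\ }I$, $1\mapsto a$, means $\operatorname{\mathsf{Ann}}_R(a)=0$; thus $a$ is $R$-regular, $\dim R/(a)=0$, and $I=(a)$ is a parameter ideal of $R$.

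I do not expect a serious difficulty in carrying this out. The one place requiring mild care is the passage from ``$I$ has a free direct summand'' to ``$I\cong R$'' in the possibly non-domain case, which is precisely why I work over the total quotient ring rather than invoking a naive generic-rank count.
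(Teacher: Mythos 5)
Your proof is correct and follows essentially the same route as the paper's: both reduce to $d=1$, observe that the $\mathfrak m$-primary ideal $I$ is maximal Cohen--Macaulay so that $\delta(I)>0$ means $I$ has a free direct summand, and then show the complementary summand vanishes so that $I$ is principal on a nonzerodivisor. The only difference is in that last step: the paper realizes the decomposition internally as $I=J\oplus(x)$ with $x$ an $R$-regular element and kills $J$ directly by noting $xJ\subseteq J\cap(x)=0$, whereas you work with the abstract splitting $I\cong R\oplus J$ and pass to the total quotient ring, using torsion-freeness of $J$ --- both arguments are valid and of comparable length.
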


\begin{proof}
Since $d\leq 1$, the $\mathfrak{m}$-primary ideal $I$ is a maximal Cohen-Macaulay $R$-module. Therefore the condition $\delta(I)>0$ provides that $I$ has a free direct summand. We have $I=J+(x)$ and $J\cap (x)=0$ for some ideal $J$ and $R$-regular element $x\in I$. Let $y\in J$. Then $xy\in J\cap(x)=0$. Since $x$ is $R$-regular, the equality $xy=0$ implies $y=0$. This shows that $J=0$ and $I=(x)$.
\end{proof}

Now we can prove Theorem \ref{A}.

\begin{proof}[Proof of Theorem \ref{A}]
(b) $\Rightarrow$ (c): If $I$ is a parameter ideal, then $\syz^d (R/I)=R$ and hence $\delta^d(R/I)=1>0$.

(a), (c) $\Rightarrow$ (b): Assume that $\delta(R/I)>0$. Then the inequality $\delta(I)>0$ also holds because $I/I^2$ is a free $R/I$-module and thus there is a surjective homomorphism $I\to R/I$. Therefore we only need to prove the implication (c) $\Rightarrow$ (b)  in the case $n>0$. We show the implication by induction on the dimension $d$. 

If $d=1$, then $n=1$ by Lemma \ref{D}. Using Lemma \ref{G}, it follows that $I$ is a parameter ideal.

Now let $d>1$. Take $x\in I\setminus \mathfrak{m}I$ to be an $R$-regular element. Then the image of $x$ in the free $R/I$-module $I/I^2$ forms a part of a free basis over $R/I$. This provides that the map $x:R/I \to I/I^2$ is injective. We see from Lemma \ref{E} that 
\begin{align} \label{H}
\delta_{R/(x)}^{n-1}(I/xI)&=\delta_{R/(x)}(\syz^{n-1}_{R/(x)}(I/xI))\\
&=\delta_{R/(x)}(\syz^{n-1}_R(I)\otimes_R R/(x)) \notag\\
&\geq \delta_R(\syz_R^{n-1} I)=\delta_R^n(R/I)>0. \notag
\end{align}
Applying Lemma \ref{F} (3) to $i=1$, we have an isomorphism $I/xI\cong R/I\oplus I/(x)$ and hence we obtain an equality $\delta_{R/(x)}^{n-1}(I/xI)=\delta_{R/(x)}^{n-1}(R/I)+\delta_{R/(x)}^{n-1}(I/(x))$. It follows from (\ref{H}) that $\delta_{R/(x)}^{n-1}(R/I)>0$ or $\delta_{R/(x)}^{n-1}(I/(x))$. Note that the ideal $\overline{I}:=I/(x)$ of $\overline{R}:=R/(x)$ satisfies the same condition as (c), that is, the module $\overline{I}/\overline{I}^2$ is free over $\overline{R}/\overline{I}=R/I$, because $\overline{I}/\overline{I}^2=I/((x)+I^2)$ is a direct summand of $I/I^2$ by Lemma \ref{F} (2). By the induction hypothesis, the ideal $\overline{I}$ is a parameter ideal of $\overline{R}$. Then we see that $I$ is also a parameter ideal of $R$. 

(c) $\Rightarrow$ (d): This implication is trivial.

Next we prove by induction on $d$ the implication (d) $\Rightarrow$ (b) when $\depth \gr_I(R)\geq d-1$ and $I^i/I^{i+1}$ is a free $R/I$-module for any $i>0$. If $d=1$, then $\delta(I^l)>0$ by Lemma \ref{D}. By Lemma \ref{G}, it follows that $I^l$ is a parameter ideal.  Set $(y):=I^l$. Taking a minimal reduction $(t)$ of $I$, we have $I^{m+1}=tI^m$ for any $m \gg 0$. Setting $m=pl$, we obtain that $I\cong y^pI=I^{m+1}=tI^m=(ty^p)$. This shows that $I$ is a parameter ideal.

Assume $d>1$. Since $k$ is infinite, there is an element $x\in I\setminus I^2$ such that the initial form $x^*\in G$ is a non-zerodivisor of $G$. We see from Lemma \ref{E} that $\delta^{n-1}_{R/(x)}(I^l/xI^l)\geq \delta^n_R(R/I^l)>0$ in the same way as (\ref{H}). Applying Lemma \ref{F} (3), we get an isomorphism $I^l/xI^l\cong I^{l-1}/I^l\oplus I^l/xI^{l-1}$ and then we see that $\delta_{R/(x)}^{n-1}(I^l/xI^l)=\delta_{R/(x)}^{n-1}(I^{l-1}/I^l)+\delta_{R/(x)}^{n-1}(I^l/xI^{l-1})$. Since $I^{l-1}/I^l$ is a free $R/I$-module, we have $\delta_{R/(x)}^{n-1}(R/I)>0$ or  $\delta_{R/(x)}^{n-1}(I^l/xI^{l-1})>0$. In the case that $\delta_{R/(x)}^{n-1}(R/I)>0$, we already showed that $I$ is a parameter ideal. So we may assume that $\delta_{R/(x)}^{n-1}(I^l/xI^{l-1})>0$. The equality $xI^{l-1}=I^{l}\cap (x)$ in Lemma \ref{F} (1) shows that the image $\overline{I^l}$ of $I^l$ in $R/(x)$ coinsides with $I^l/xI^{l-1}$. Thus it holds that $\delta_{R/(x)}^{n-1}(\overline{I}^l)=\delta_{R/(x)}^{n-1}(\overline{I^l})>0$. We also note that $\overline{I}^i/\overline{I}^{i+1}$ is free over $\overline{R}/\overline{I}$ by Lemma \ref{F} (3). By the induction hypothesis, $\overline{I}$ is a parameter ideal of $R/(x)$. This implies that $I$ is also a parameter ideal of $R$.

Finaly, the implication (b) $\Rightarrow$ (a) follows from the proof of \cite[Theorem 11.42]{LW}.
\end{proof}

Next, to prove Theorem \ref{B}, we start by recalling the definition of regularity; see \cite[Definition 3]{O}.
\begin{dfn}
Let $A$ be a positively graded homogeneous ring and $M$ be a finitely generated graded $A$-module. Then the {\it (Castelnuovo-Mumford) regularity} of $M$ is defined by $\reg_A(M)=\sup\{i+j|H^i_{A_+}(M)_j\not=0\}$.
\end{dfn}

Here we state some properties of regularity.

\begin{rem}
Let $A$ and $M$ be the same as in the definition above.
\begin{itemize}
\item[(1)] Let $a\in A$ be a homogeneous $M$-regular element of degree $1$. Then we have $\reg_{A/(a)}(M)=\reg_A(M)$.
\item[(2)] If $A$ is an artinian ring, then $\reg(M)=\max\{p\mid M_p\not=0\}$. 
\end{itemize}
\end{rem}

Now let us state the proof of Theorem \ref{B}.

\begin{proof}[Proof of Theorem \ref{B}]
Since $k$ is infinite, there exists a regular sequence $x_1,\dots,x_d$ of $R$ in $I$ such that the sequence of initial forms $x_1^*,\dots,x_d^*$ makes a homogeneous system of parameters of $\gr_I(R)$. Then the eqularity $\gr_I(R)/(x_1^*,\dots,x_d^*)=\gr_{I'}(R')$ holds, where $R'=R/(x_1,\dots,x_d)$ and $I'=I/(x_1,\dots,x_d)$. It holds that $\reg (\gr_I(R))=\reg (\gr_{I'}(R'))=\max\{p|\gr_{I'}(R')_p\not=0\}=\max\{p|\gr_I(R)_p\not\subset (x_1^*,\dots,x_d^*)\}=\max\{p|I^p\not\subset(x_1,\dots,x_d)\}$. To show the inequality $\index(I)\geq \reg_I(R)+1$, it is enough to check that $I^p\subset (x_1,\dots,x_d)$ if $\delta(R/I^p)>0$. We prove this by induction on $d$. 

Let $\overline{R}$ be the quotient ring $R/(x_1)$ and $\overline{I}$ be the ideal $I/(x_1)$ of $\overline{R}$. Suppose $\delta_R(R/I^p)>0$. Then, since there is a surjection from $J:=I^p+(x)$ to $R/I^p$ by Lemma \ref{F} (4), $\delta_R(J)$ is greater than $0$ . Lemma \ref{E} yields that $\delta_{\overline{R}}(J/x_1J)\geq \delta_R(J)>0$. Using Lemma \ref{F} (5), we obtain an isomorphism $J/x_1J\cong R/J\oplus I^p/x_1I^{p-1}$, and hence $\delta_{\overline{R}}(J/x_1J)=\delta_{\overline{R}}(R/J)+\delta_{\overline{R}}(I^p/x_1I^{p-1})$. Therefore we see that $\delta_{\overline{R}}(R/J)>0$ or $\delta_{\overline{R}}(I^p/x_1I^{p-1})>0$. Now assume that $d=1$. If $\delta_{\overline{R}}(I^p/x_1I^{p-1})>0$, then $I^p/x_1I^{p-1}=\overline{R}$ since $I^p/x_1I^{p-1}=I^p/(x_1)\cap I^p$ is an ideal of the Artinian ring $\overline{R}$ and we apply Lemma \ref{G}. Therefore $I^p=R$ and this is a contradiction. So we get $\delta_{\overline{R}}(R/J)>0$. In this case, $R/J$ must have an $\overline{R}$-free summand. This shows that $J=(x_1)$ and $I^p\subset (x_1)$.

Next we assume that $d>1$. By Theorem \ref{A},  $\delta_{\overline{R}}(I^p/x_1I^{p-1})=0$. So we have $\delta_{\overline{R}}(R/J)>0$.  Then $R/J=R/(I^p+(x_1))=\overline{R}/\overline{I}^p$ hold. By the induction hypothesis, $\overline{I}^p\subset (x_2,\dots,x_d)/(x_1)$. Hence we get $I^p\subset (x_1,\dots,x_d)$.

It remains to show that $\index(I)=\reg(\gr_I(R))+1$ if $I\subset \mathsf{tr}(\omega)$. We only need to prove that $I^p\subset (x_1,\dots,x_d)$ implies $\delta(R/I^p)>0$. This immidiately follows from the inequalities $\delta(R/I^p)\geq \delta(R/(x_1,\dots,x_d))$ and $\delta(R/(x_1,\dots,x_d))>0$ by applying Theorem \ref{A} (b)$\Rightarrow$ (a) to the ideal $(x_1,\dots,x_d)$.
\end{proof}

\section{examples}
In this section, $R,\mathfrak{m},k),$ and $d$ are the same as in the previous section. Let $I$ be an $\mathfrak{m}$-primary ideal of $R$. To begin with, let us recall the definition of Ulrich ideals.

\begin{dfn} \label{AA}
We say that $I$ is an Ulrich ideal of $R$ if it satisfies the follwing.
\begin{itemize}
\item[(1)] $\gr_I(R)$ is a Cohen-Macaulay ring with $a(\gr_I(R))\leq 1-d$.
\item[(2)] $I/I^2$ is a free $R/I$-module.
\end{itemize}
\end{dfn}

Here we denote by $a(\gr_I(R))$ the $a$-invariant of $a(\gr_I(R))$. Since $k$ is infinite, the condition (1) of Definition \ref{AA} is equivalent to saying that $I^2=QI$ for some/any minimal reduction $Q$ of $I$.

Next, we prove that Ulrich ideals satisfies the assumption of Theorem \ref{A} and \ref{B}. 

\begin{prop}
Let $I$ be an Ulrich ideal of $R$. Then $I^l/I^{l+1}$ is a free $R/I$-module for any $l\geq 1$. 
\end{prop}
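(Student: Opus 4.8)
The plan is to use the structure theory of Ulrich ideals, in particular the defining relation $I^2 = QI$ for a minimal reduction $Q$ of $I$, together with the fact that $Q$ is generated by a regular sequence $x_1,\dots,x_d$ whose initial forms form a homogeneous system of parameters of the Cohen--Macaulay ring $\gr_I(R)$. Since $k$ is infinite, I may choose such a minimal reduction $Q = (x_1,\dots,x_d)$, and the Cohen--Macaulayness of $\gr_I(R)$ with the $a$-invariant bound $a(\gr_I(R)) \le 1-d$ will force the associated graded ring to have a very restricted shape: the defining ideal of $\gr_I(R)$ over $(R/I)[X_1,\dots,X_d]$ is generated in degree $2$, and $\gr_I(R)$ modulo the system of parameters $x_1^*,\dots,x_d^*$ is the trivial extension $R/I \ltimes (I/Q\cap I^2)$, i.e. has $\gr$ concentrated in degrees $0$ and $1$.

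First I would reduce to dimension $0$. By the hypothesis that $x_1,\dots,x_d$ is a regular sequence with initial forms a regular sequence on $\gr_I(R)$, Lemma~\ref{F} is applicable inductively (the multiplication maps $x_i : I^{j-1}/I^j \to I^j/I^{j+1}$ are injective because $x_i^*$ is a non-zerodivisor on $\gr_I(R)$). Passing to $R' = R/Q$ and $I' = I/Q$, part~(2) of Lemma~\ref{F} shows that freeness of $I'^l/I'^{l+1}$ over $R'/I' = R/I$ for all $l$ implies freeness of $I^l/I^{l+1}$ over $R/I$ for all $l$, by peeling off free summands one $x_i$ at a time; here I also use that $\gr_{I'}(R') = \gr_I(R)/(x_1^*,\dots,x_d^*)$. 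So it suffices to prove the statement when $d = 0$, where $R$ is Artinian, $Q = 0$, and the Ulrich condition reads $I^2 = 0$.

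In the Artinian case the claim becomes: if $I^2 = 0$ and $I/I^2 = I$ is $R/I$-free, then $I^l/I^{l+1}$ is $R/I$-free for every $l \ge 1$ — but $I^l = 0$ for $l \ge 2$, so the only content is $l = 1$, which is exactly hypothesis~(2) of Definition~\ref{AA}. Thus the dimension-zero case is trivial, and the whole proposition follows from the reduction step. The main obstacle, and the part I would write out carefully, is verifying that the reduction via Lemma~\ref{F} is legitimate: I need to know that the initial form $x_i^*$ of each $x_i$ is a non-zerodivisor on the successive quotient rings $\gr_I(R)/(x_1^*,\dots,x_{i-1}^*)$ (so that the injectivity hypothesis of Lemma~\ref{F} holds at each stage), and that $I^l/I^{l+1}$ being free is genuinely inherited from $I'^l/I'^{l+1}$ being free — the latter is immediate from Lemma~\ref{F}(2), which exhibits $I^i/I^{i+1}$ as a direct sum $I^{i-1}/I^i \oplus (\text{image in } R/(x))$, so an induction on the number of $x_i$'s does it. One should also check at the outset that an Ulrich ideal is automatically $\mathfrak{m}$-primary with a minimal reduction generated by a system of parameters with the required initial-form property; this is standard and can be cited from \cite{GOTWY}.
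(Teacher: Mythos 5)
Your overall strategy --- reduce modulo a regular sequence $x_1,\dots,x_d$ generating a minimal reduction $Q$, with initial forms regular on $\gr_I(R)$, and observe that at the bottom $I^2=QI\subset Q$ forces $\overline{I}^2=0$ --- is genuinely different from the paper's proof, which instead runs an induction on $l$ using the exact sequences $0\to I^l/Q^l\to Q^{l-1}/Q^l\to Q^{l-1}/I^l\to 0$ and $0\to Q^l/I^{l+1}\to I^l/I^{l+1}\to I^l/Q^l\to 0$ together with the identification $I^l/Q^l\cong\syz_{R/Q}\bigl((R/I)^{\oplus m}\bigr)\cong(I/Q)^{\oplus m}$. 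However, as written your argument is circular at its key step. You deduce the freeness of $I^i/I^{i+1}$ from the direct-sum decomposition in Lemma~\ref{F}(2), but Lemma~\ref{F} takes as a \emph{hypothesis} that $I^i/I^{i+1}$ is a free $R/I$-module for all $1\le i\le l$; indeed the proof of part (2) splits the injection $x\colon I^{i-1}/I^i\to I^i/I^{i+1}$ precisely by using that its \emph{target} is a free module over the Artinian ring $R/I$. So Lemma~\ref{F}(2) cannot be invoked to establish the very freeness being proved.

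The gap is repairable, but the splitting has to come from the other end of the sequence. Since $x^*$ is $\gr_I(R)$-regular, the sequence $0\to I^{i-1}/I^i\xrightarrow{x} I^i/I^{i+1}\to I^i/(xI^{i-1}+I^{i+1})\to 0$ of $R/I$-modules is exact with no freeness assumption, and Lemma~\ref{F}(1) (whose proof uses only the injectivity hypothesis) identifies the cokernel with $\overline{I}^{\,i}/\overline{I}^{\,i+1}$. If that cokernel is free over $R/I$ (induction on $d$), the sequence splits because a surjection onto a projective module splits, and then $I^i/I^{i+1}\cong I^{i-1}/I^i\oplus\overline{I}^{\,i}/\overline{I}^{\,i+1}$ is free once $I^{i-1}/I^i$ is (induction on $i$, base case $I^0/I=R/I$). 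You should also tighten the base of the reduction: in $R/Q$ the module $\overline{I}/\overline{I}^2=\overline{I}\cong I/Q$ is \emph{not} ``exactly hypothesis (2) of Definition~\ref{AA}'' --- that hypothesis concerns $I/I^2$, of rank $\mu(I)$, whereas $I/Q$ has rank $\mu(I)-d$. Its freeness must either be carried down through the reduction one $x_j$ at a time (a legitimate use of Lemma~\ref{F}(2) with $l=1$, since $I/I^2$ is free by hypothesis), or obtained from the splitting of the injection $Q/I^2\hookrightarrow I/I^2$ of free $R/I$-modules, which exhibits $I/Q$ as a direct summand of $I/I^2$.
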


\begin{proof}
We use induction on $l$. By definition, $I/I^2$ is free over $R/I$. Let $l>1$, and take a minimal reduction $Q$ of $I$. Consider the canonical exact sequence $0 \to I^l/Q^l \to Q^{l-1}/Q^l \to Q^{l-1}/I^l \to 0$ of $R/Q$-modules. Then $Q^{l-1}/Q^l$ is a free $R/Q$-module and $Q^{l-1}/I^l=Q^{l-1}/IQ^{l-1}=R/I\otimes_{R/Q} Q^{l-1}/Q^l$ is a free $R/I$-module. Therefore $I^l/Q^l=\syz_{R/Q}((R/I)^{\oplus m})=\syz_{R/Q}(R/I)^{\oplus m}=(I/Q)^{\oplus m}$ for some $m$. Since $I/Q$ is free over $R/I$, $I^l/Q^l$ is also a free $R/I$-module. We now look at the canonical exact sequence $0 \to Q^l/I^{l+1} \to I^l/I^{l+1} \to I^l/Q^l \to 0$ of $R/I$-modules. Then as we already saw, $I^l/Q^l$ $Q^l/I^{l+1}$ are both free over $R/I$. Thus the sequence is split exact and $I^l/I^{l+1}$ is a free $R/I$-module.
\end{proof}

Now we give the proof of Corollary \ref{AAA}.

\begin{proof}[Proof of Corollary \ref{AAA}]
It follows from \cite[Theorem 1.1]{D2} that $\index(R)$ is finite number. Since $I$ is not a parameter ideal, we have $\delta(R/I)=0$ by Theorem \ref{A}. If $I\subset \mathfrak{m}^{\index(R)}$, then we have a surjective homomorphism $R/I \to R/\mathfrak{m}^{\index(R)}$ and thus $\delta(R/I)\geq  \delta(R/\mathfrak{m}^{\index(R)})>0$. This is a contradiction.
\end{proof}

To end this section, we give an example of an ideal showing that the condition $I^2/I^3$ is free over $R/I$ does not implies that $I^l/I^{l+1}$ is free over $R/I$ for any $l\geq 1$. 
\begin{ex}
Let $S=k[[x,y]]$ be the formal power series ring in two variables, $\mathfrak{n}$ be the maximal ideal of $S$, $L=(x^4)S$, $J=(x^2,y)S$, $R=S/L$ be the quotient ring of $S$ by $L$ and $I$ be the ideal $J/L$ of $R$. Then $I/I^2$ is free over $R/I$ but $I^2/I^3$ is not so.
\end{ex}

\begin{proof}
We note that $J$ is a parameter ideal of $S$ and therefore $J^l/J^{l+1}$ is free over $S/J$ for any $l\geq1$. Since $I^2=(J^2+L)/L=J^2/L$, we have $I/I^2=(J/L)/(J^2/L)=J/J^2$ which is free over $S/J=R/I$. On the other hand, $J^3\not\supset L$ so $I^2/I^3\not=J^2/J^3$. But $L\subset \mathfrak{n}J^2$, and hence the minimal number of generators $\mu_R(I^2)$ is equal to $\mu_S(J^2)$. If $I^2/I^3$ is a free $R/I$-module (hence free over $S/J$), then $I^2/I^3$ and $J^2/J^3$ have the same rank $\mu_R(I^2)$. However, there is a natural surjective homomorphism $\phi:J^2/J^3\to I^2/I^3$, so $\phi$ must be an isomorphism. This contradicts that $I^2/I^3\not=J^2/J^3$.
\end{proof}

\begin{ac}
The author is grateful to his supervisor Ryo Takahashi for giving him helpful advice throughout the paper.
\end{ac}


\end{document}